\documentclass[12pt]{amsart}

\usepackage{amsmath,amssymb,amsthm,amsfonts}
\usepackage{mathtools}
\usepackage{enumitem}
\usepackage{hyperref}
\usepackage{cleveref}
\usepackage{geometry}
\usepackage{microtype}
\usepackage{booktabs}
\usepackage[backend=biber,style=alphabetic,maxbibnames=6]{biblatex}
\newtheorem{innercounter}{X}[section]
\newtheorem{proposition}[innercounter]{Proposition}
\newtheorem{lemma}[innercounter]{Lemma}
\newtheorem{corollary}[innercounter]{Corollary}
\theoremstyle{definition}
\newtheorem{definition}[innercounter]{Definition}
\newtheorem{remark}[innercounter]{Remark}

\newtheorem*{thmA}{Theorem~A}
\newtheorem*{thmB}{Theorem~B}
\newtheorem*{thmC}{Theorem~C}

\newcommand{\Freg}{F_{\mathrm{reg}}}

\begin{document}

\title[Lee--Yang Edge Exponent via Logarithmic Averaging]{%
  The Lee--Yang Edge Exponent via Logarithmic Averaging}

\author{{\rm Qiao Wang}}
\address{School of Information Science and Engineering, Southeast University, Nanjing, 211189, China}
\email{qiaowang@seu.edu.cn}

\subjclass[2020]{82B20 (Primary); 30D35, 30F99 (Secondary)}
\keywords{Lee--Yang theorem; edge singularity; Jensen formula;
  free energy; monodromy; conformal field theory}

\begin{abstract}
Let $F$ be the thermodynamic free energy of a ferromagnetic Ising model,
analytic on $\mathbb{C}^{*}\setminus\mathcal{Z}_{\beta}$.  The Lee--Yang
edge at $z_c\in\partial\mathcal{Z}_\beta$ is characterised by
$F(z)=F(z_c)+B(z-z_c)^{\sigma+1}+o(|z-z_c|^{\sigma+1})$ with
$\sigma\in(-1,0)$ and $B\neq 0$.

We prove three results:\\
\textbf{Theorem A} (Jensen slope): defining the Jensen average
$\widetilde{N}(x)=\frac{1}{2\pi}\int_0^{2\pi}\log|\widetilde{F}(e^{x+i\theta})|\,d\theta$
of $\widetilde{F}=F-F(z_c)$, the edge exponent satisfies
$\widetilde{N}'(0^+)=\sigma+1$.  The proof is a direct application of
Jensen's formula.\\
\textbf{Theorem B} (Monodromy): the monodromy of $F$ around $z_c$
multiplies the singular part by $e^{2\pi i(\sigma+1)}$, a primitive
$q$-th root of unity when $\sigma+1=p/q$.\\
\textbf{Theorem C} (Kac monodromy): for any 2D CFT at an RG fixed
point with relevant operator $\phi$ of weight $h_\phi<0$ satisfying
the Lee--Yang property, the RG scaling equation forces
$\sigma=h_\phi/(1-h_\phi)$ and monodromy order
$q=\mathrm{denom}(1/(1-h_\phi))$.

We also prove that the edge expansion follows from the density
asymptotics $\rho(\theta)\sim A|\theta-\theta_c|^\sigma$ via a
Mellin-transform calculation, making all three theorems unconditional
for the $d=2$ Ising model.
\end{abstract}

\maketitle

\section{Introduction}
\label{sec:intro}

\subsection{Background: the Lee--Yang edge}
\label{ssec:LY}

Lee and Yang proved \cite{LY52a,LY52b} that the partition function of any
ferromagnetic Ising model has all its zeros on the unit circle
$\mathbb{T}=\{|z|=1\}$.  In the thermodynamic limit $|V|\to\infty$,
these zeros accumulate on an arc $\mathcal{Z}_\beta\subset\mathbb{T}$,
and the free energy
\begin{equation}
  \label{eq:F-def}
  F(z) = \lim_{n\to\infty}\frac{1}{|V_n|}\log Z_{V_n}(z)
\end{equation}
is analytic on $\mathbb{C}^*\setminus\mathcal{Z}_\beta$.  At the
endpoint $z_c=e^{i\theta_c}$ of $\mathcal{Z}_\beta$ --- the
\emph{Lee--Yang edge} --- the density of zeros vanishes:
\begin{equation}
  \label{eq:rho-asym}
  \rho(\theta) \sim A|\theta-\theta_c|^{\sigma},
  \quad\theta\to\theta_c^+,\qquad A>0,\;\sigma\in(-1,0),
\end{equation}
and $F$ has a fractional branch point:
\begin{equation}
  \label{eq:edge-expansion}
  F(z) = F(z_c) + B(z-z_c)^{\sigma+1} + o(|z-z_c|^{\sigma+1}),
  \quad z\to z_c,\quad B\neq 0.
\end{equation}

The exponent $\sigma$ is universal within each spatial dimension.  Its
known values are:
\[
  \sigma = \begin{cases}
    -1/6 & d=2 \text{ (exact, CFT)},\\
    1/2  & d\geq 6 \text{ (mean field \cite{Fisher78})},\\
    -0.085\ldots & d=3 \text{ (numerical \cite{Fisher78})}.
  \end{cases}
\]
The value $\sigma=-1/6$ for $d=2$ is due to Cardy \cite{Cardy85} via the
Virasoro minimal model $\mathcal{M}(2,5)$ (central charge $c=-22/5$,
weight $h_{1,2}=-1/5$), and confirmed by Zamolodchikov
\cite{Zamolodchikov91} via the thermodynamic Bethe ansatz.
\Cref{ssec:proof-C} gives a new derivation from first principles.

\subsection{Results}
\label{ssec:results-intro}

This paper proves three theorems.  All are stated formally in
\cref{sec:setup}; here we describe them informally.
\vskip 0.2cm
\paragraph{Theorem A (Jensen slope).}
Define the \emph{Jensen average} $\widetilde{N}(x)=\frac{1}{2\pi}
\int_0^{2\pi}\log|\widetilde{F}(e^{x+i\theta})|\,d\theta$ where
$\widetilde{F}=F-F(z_c)$.  Then $\widetilde{N}(x)=\log|B|+(\sigma+1)x
+o(x)$ as $x\to 0^+$, so $\widetilde{N}'(0^+)=\sigma+1$.  The proof
uses only Jensen's formula; the functional $\widetilde{N}$ is the
one-variable Ronkin function when $F$ is a polynomial \cite{FPT00,PR04},
but neither the Newton polytope nor amoeba geometry is needed.
\vskip 0.2cm
\paragraph{Theorem B (Monodromy).}
The monodromy of $F$ under analytic continuation around $z_c$ multiplies
the singular part $F_{\mathrm{s}}=B(z-z_c)^{\sigma+1}$ by
$e^{2\pi i(\sigma+1)}$.  When $\sigma+1=p/q$ with $\gcd(p,q)=1$, this is
a primitive $q$-th root of unity.  For $d=2$ Ising ($\sigma=-1/6$,
$\sigma+1=5/6$), the monodromy order is $q=6$.
\vskip 0.2cm
\paragraph{Theorem C (Kac monodromy).}
For a 2D CFT at an RG fixed point with relevant operator $\phi$
of weight $h_\phi<0$ satisfying the Lee--Yang property, the RG scaling
equation $F_{\mathrm{s}}(\lambda^{y_t}t)=\lambda^2 F_{\mathrm{s}}(t)$
with $y_t=2(1-h_\phi)$ forces $\sigma=h_\phi/(1-h_\phi)$.  Theorem B
then gives $q=\mathrm{denom}(1/(1-h_\phi))$.  This recovers $\sigma=-1/6$,
$q=6$ for $\mathcal{M}(2,5)$ and yields explicit predictions for all
$\mathcal{M}(2,2m+1)$.
\vskip 0.2cm
\paragraph{Unconditional results.}
Proposition \ref{prop:H2} proves that the edge expansion \eqref{eq:edge-expansion}
follows from the density asymptotics \eqref{eq:rho-asym} via a Mellin
calculation with an explicit formula for $B$ in terms of $A$ and $\sigma$.
This makes Theorems A, B, C all unconditional for the $d=2$ Ising model.

\subsection{Context}
\label{ssec:context}

\paragraph{\it Ronkin functions and amoebas.}
For a Laurent polynomial $f$, the Ronkin function encodes the amoeba
$\mathrm{Log}(V(f))$ \cite{GKZ94,FPT00,PR04}.  The Lee--Yang property
forces the amoeba of $Z_G$ to collapse to the origin \cite{BB09a}.  For
the thermodynamic limit $F$, no Newton polytope exists; the paper uses
the Jensen average $\widetilde{N}$ as the analytic substitute, with all
arguments resting directly on Jensen's formula.
\vskip 0.2cm
\paragraph{\it Conformal field theory.}
Fisher \cite{Fisher78} proposed the $i\phi^3$ field theory as the
Lee--Yang edge universality class; its $d=2$ fixed point was identified
as $\mathcal{M}(2,5)$ by Cardy \cite{Cardy85}.  Theorem C derives the
edge exponent and monodromy purely from the RG scaling axiom, independently
of the field-theory argument.

\section{Setup, hypotheses, and statement of theorems}
\label{sec:setup}

\subsection{Hypotheses}
\label{ssec:hypotheses}

Let $(G_n)_{n\geq 1}$ be a lattice exhaustion with $|V_n|\to\infty$ and
$Z_n(z)$ the corresponding partition functions.

\medskip
\noindent\textbf{(H1) Thermodynamic limit.}
$F\colon\mathbb{C}^*\setminus\mathcal{Z}_\beta\to\mathbb{C}$ is analytic
and $(1/|V_n|)\log Z_n(z)\to F(z)$ locally uniformly on
$\mathbb{C}^*\setminus\mathcal{Z}_\beta$.

\medskip
\noindent\textbf{(H2) Edge expansion.}
There exist $z_c\in\mathcal{Z}_\beta$, $B\neq 0$, $\sigma\in(-1,0)$,
and a remainder $R(z)=o(|z-z_c|^{\sigma+1})$ as $z\to z_c$, such that
\begin{equation}
  \label{eq:H2}
  F(z) = F(z_c) + B(z-z_c)^{\sigma+1} + R(z),
\end{equation}
where $(z-z_c)^{\sigma+1}$ is the principal branch on
$\mathbb{C}\setminus[z_c,+\infty)$.

\medskip
\noindent\textbf{(H3) Uniform integrability.}
There exist $\delta>0$ and $C<\infty$ such that
\begin{equation}
  \label{eq:H3}
  \sup_{n\geq 1}\frac{1}{|V_n|}
  \int_0^{2\pi}\bigl|\log|Z_n(e^{x+i\theta})|\bigr|\,d\theta \leq C
  \quad\text{for all }|x|<\delta.
\end{equation}

\begin{remark}[Verifiability]
  \label{rem:hypotheses}
  (H1) holds by subadditive ergodic theory.  (H3) follows from (H1) and
  the Lee--Yang property.  (H2) is the essential input: it is proved from
  \eqref{eq:rho-asym} in Proposition \ref{prop:H2}, giving $\sigma$ and an explicit
  $B\neq 0$ for the $d=2$ Ising model.
\end{remark}

\begin{definition}[Jensen average]
  \label{def:Jensen}
  Set $\widetilde{F}(z)\coloneqq F(z)-F(z_c)$.  For $x\neq 0$:
  \begin{equation}
    \label{eq:Jensen-avg}
    \widetilde{N}(x) \coloneqq
    \frac{1}{2\pi}\int_0^{2\pi}\log|\widetilde{F}(e^{x+i\theta})|\,d\theta.
  \end{equation}
\end{definition}

When $F$ is a Laurent polynomial, $\widetilde{N}$ coincides with the
univariate Ronkin function \cite{FPT00}.

\subsection{Statements of Theorems}
\label{ssec:thmA}

\begin{thmA}[Jensen slope equals $\sigma+1$]
  \label{thm:slope}
  Under hypotheses \textup{(H1)--(H3)}, we have:
  \begin{enumerate}[label={\normalfont(\roman*)}]
    \item\label{it:TDL}
    $(1/|V_n|)\int_0^{2\pi}\log|Z_n(e^{x+i\theta})|\,d\theta
    \to 2\pi\widetilde{N}(x)$ locally uniformly for $x\neq 0$.
    \item\label{it:slope}
    As $x\to 0^+$,
    \begin{equation}
      \label{eq:Jensen-asym}
      \widetilde{N}(x) = \log|B| + (\sigma+1)x + o(x).
    \end{equation}
    In particular $\widetilde{N}'(0^+)=\sigma+1$.
    \item\label{it:invariance}
    Under a local analytic change $z\mapsto\varphi(z)$ with
    $\varphi(z_c)=z_c'$ and $\varphi'(z_c)\neq 0$, the function
    $\widetilde{F}\circ\varphi$ satisfies \textup{(H2)} with the same
    $\sigma$ and coefficient $B'=B\varphi'(z_c)^{\sigma+1}\neq 0$.
  \end{enumerate}
\end{thmA}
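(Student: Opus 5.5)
The plan is to treat all three parts through Jensen's formula applied to $\widetilde F$ on thin annuli $\{e^{-\delta}<|z|<e^{\delta}\}$. Part (iii) is purely local, so I dispose of it first. Part (ii) is the main point.

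\emph{Part (i).} For fixed $x\neq 0$ the circle $|z|=e^{x}$ avoids $\mathcal{Z}_\beta\subset\mathbb{T}$. By (H1), $(1/|V_n|)\log|Z_n|\to\operatorname{Re}F$ uniformly on that circle. (H3) gives a uniform $L^1$ bound in $\theta$, which upgrades this to convergence of the $\theta$-integrals, locally uniformly in $x$ on compact subsets of $0<|x|<\delta$. It then remains to identify the limiting circle average with $2\pi\widetilde N(x)$. I would do this by writing both sides via Jensen's formula on the circle $|z|=e^{x}$. Each side is then a boundary constant plus an explicit zero-counting term (zeros of $Z_n$ on one side, zeros of $\widetilde F$ on the other). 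The two zero-counting terms must be matched in the limit $n\to\infty$, using the locally uniform convergence of $(1/|V_n|)\log Z_n$ to $F$ from (H1). I expect this matching of the zero-counting contributions to be delicate.

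\emph{Part (ii).} I would first compute the value at the edge. Insert (H2) into the definition of $\widetilde N$:
\[
\log|\widetilde F(z)|=\log|B|+(\sigma+1)\log|z-z_c|+\log\bigl|1+R(z)B^{-1}(z-z_c)^{-\sigma-1}\bigr|.
\]
The last term is $o(1)$ near $z_c$. Away from $z_c$ it is bounded, because $\widetilde F$ is continuous and nonvanishing on a neighbourhood of the circle minus a small arc around $z_c$. Jensen's formula gives $\frac1{2\pi}\int_0^{2\pi}\log|e^{x+i\theta}-z_c|\,d\theta=\max(x,0)$, since $|z_c|=1$. Hence, as $x\to0^+$,
\[
\widetilde N(x)=\log|B|+(\sigma+1)x+E(x),
\]
where $E(x)$ is the average of the error term. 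The target is therefore $E(x)=o(x)$.

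To obtain $E(x)=o(x)$, I would use that $\widetilde N$ is a circle average of $\log|\widetilde F|$. On an annulus free of zeros of $\widetilde F$, Jensen's formula makes $\widetilde N$ affine in $x$. Its slope equals $\frac1{2\pi}$ times the total change of $\arg\widetilde F$ around the circle $|z|=e^x$. As $x\to0^+$, the circle passes close to $z_c$, and the argument change is concentrated there. Along a short arc through $z_c$, the principal power $(z-z_c)^{\sigma+1}$ contributes an argument change of $2\pi(\sigma+1)$, while $R$ contributes $o(1)$ by (H2). So the slope tends to $\sigma+1$, which forces $E(x)=o(x)$.

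The main obstacle is justifying the argument-principle step rigorously for a branch-type singularity sitting on the unit circle. This needs control of $\arg(1+R/(B(z-z_c)^{\sigma+1}))$ uniformly on small arcs, and exclusion of zeros of $\widetilde F$ accumulating at $z_c$. I would try to get the latter directly from (H2), since $|\widetilde F|\geq \tfrac12|B||z-z_c|^{\sigma+1}>0$ near $z_c$.

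\emph{Part (iii).} Write $\varphi(z)-z_c'=\varphi'(z_c)(z-z_c)(1+O(z-z_c))$. Then
\[
\bigl(\varphi(z)-z_c'\bigr)^{\sigma+1}=\varphi'(z_c)^{\sigma+1}(z-z_c)^{\sigma+1}\bigl(1+O(|z-z_c|)\bigr),
\]
with branches chosen consistently on the slit domain. Substituting into (H2) gives $B'=B\varphi'(z_c)^{\sigma+1}$ and remainder $o(|z-z_c|^{\sigma+1})$. The main check is that the principal-branch convention is compatible with the rotation by $\arg\varphi'(z_c)$; I would settle this by shrinking the neighbourhood and re-choosing the slit.
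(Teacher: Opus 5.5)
\textbf{Part (i).} The step you call delicate cannot be carried out: the two sides are circle averages of different functions, and the identity is incompatible with (ii). You correctly note that the pointwise limit is $\mathrm{Re}\,F$ (the paper's proof writes $|F|$). So the left side of (i) tends to $\int_0^{2\pi}\mathrm{Re}\,F(e^{x+i\theta})\,d\theta$, while $2\pi\widetilde{N}(x)$ integrates $\log|F-F(z_c)|$.

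Every zero of $Z_n$ lies on $\mathbb{T}$, so Jensen's formula evaluates the left side exactly as $2\pi\bigl(|V_n|^{-1}\log|Z_n(0)|+(\deg Z_n/|V_n|)\max(x,0)\bigr)$. Thus (i) would make $\widetilde{N}$ affine for small $x>0$ with slope $\lim\deg Z_n/|V_n|$. For the Ising polynomial this slope is $1$, which contradicts (ii) because $\sigma+1<1$. There is nothing to match: the zero-counting term on the $\widetilde{F}$ side counts solutions of $F(z)=F(z_c)$, which are unrelated to the Lee--Yang zeros. The paper does not bridge this either; it identifies the limit with $2\pi\widetilde{N}(x)$ without argument.

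\textbf{Part (ii).} There are two independent failures.

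The first is the constant term. Slope information gives at most $E(x)=E(0^+)+o(x)$, not $E(x)=o(x)$. Here $E(0^+)$ is essentially the mean over $\mathbb{T}$ of $\log|\widetilde{F}/F_{\mathrm{s}}|$. That is a global quantity, and (H2), being local at $z_c$, cannot force it to vanish. Your own remark that the error is merely \emph{bounded} away from $z_c$ is exactly this obstruction.

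The second is the slope. By (H1), $\widetilde{F}$ is single-valued and analytic on $1<|z|<e^{\delta}$. On a zero-free subannulus, Jensen's formula makes $\widetilde{N}$ affine, with slope equal to the winding number of $\widetilde{F}$ along $|z|=e^{x}$, an integer. Zeros only add integer jumps and keep $\widetilde{N}$ convex. Hence $\widetilde{N}'(0^+)\in\mathbb{Z}\cup\{-\infty\}$ and can never equal $\sigma+1\in(0,1)$. Done correctly, your argument-principle route works against (ii), not for it.

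Your count $2\pi(\sigma+1)$ comes from continuing $(z-z_c)^{\sigma+1}$ through its cut $[z_c,+\infty)$, which every circle $|z|=e^x$ with $x>0$ crosses. But $F$ is analytic across that ray near $z_c$, while $B(z-z_c)^{\sigma+1}$ jumps there by an amount of exact order $|z-z_c|^{\sigma+1}$. So (H2) is consistent with (H1) only if the slit runs along $\mathcal{Z}_\beta$, which these circles never meet. The claimed concentration at $z_c$ also fails. The circle passes at distance about $x$ from $z_c$, so along a short arc there $\arg(z-z_c)$ turns by only about $\pi$. The rest of the turning happens where (H2) gives no information.

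\textbf{The paper's route.} The paper uses a near/far split of $\log|1+\varepsilon|$, and it does not supply the missing ingredients either. On the near arc it assumes a rate $|\varepsilon|\le x^{1/2}$ that (H2) does not provide, and which would give only $O(x)$ there anyway. On the far arc it claims that $R$ vanishes on $\mathbb{T}$. That is not a hypothesis, and it is precisely the global information (H2) lacks. So the obstacles you flagged are not technicalities: (i) and (ii) as stated do not follow from (H1)--(H3).

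\textbf{Part (iii).} Your argument is the paper's chain-rule computation and is fine, modulo the slit re-choice you already mention.
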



\begin{thmB}[Monodromy at the edge]
  \label{thm:monodromy}
  Under hypotheses \textup{(H1)--(H2)}, the monodromy of $F$ under
  analytic continuation along a simple loop encircling $z_c$ once
  counter-clockwise acts on $F_{\mathrm{s}}\coloneqq B(z-z_c)^{\sigma+1}$ by
  \begin{equation}
    \label{eq:monodromy}
    \mathcal{M}(F_{\mathrm{s}};z_c) = e^{2\pi i(\sigma+1)}.
  \end{equation}
  This is a primitive $q$-th root of unity if and only if $\sigma+1=p/q$
  with $\gcd(p,q)=1$.  For $d=2$ Ising ($\sigma=-1/6$, $\sigma+1=5/6$),
  the monodromy order is $q=6$.
\end{thmB}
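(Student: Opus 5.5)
The plan is to work entirely with the local form (H2) near $z_c$. First we make the monodromy statement precise. Write $F=F(z_c)+F_{\mathrm{s}}+R$ on a small punctured disc $D^*=\{0<|z-z_c|<r\}$ with the cut removed. The natural reading, and the one we adopt, is that the constant $F(z_c)$ and the regular part are single-valued near $z_c$. In the idealised setting where $R$ extends analytically across the cut, for instance when $R=\Freg$ is analytic at $z_c$ or is a sum of higher powers $(z-z_c)^{\sigma+1+k}$ with $k\in\mathbb{Z}_{\geq1}$, the monodromy of $F$ then acts only through $F_{\mathrm{s}}$. So the theorem reduces to computing the analytic continuation of $F_{\mathrm{s}}=B(z-z_c)^{\sigma+1}$ along a loop.

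The core computation is short. Take $\gamma(t)=z_c+\varepsilon e^{i(\phi_0+2\pi t)}$, $t\in[0,1]$, starting just off the cut. Write $(z-z_c)^{\sigma+1}=\exp((\sigma+1)\log(z-z_c))$. Continuing $\log(z-z_c)$ along $\gamma$ increases its argument by exactly $2\pi$, since $\log$ is the primitive of $1/(z-z_c)$ and $\oint_\gamma dz/(z-z_c)=2\pi i$. Hence the continuation of $F_{\mathrm{s}}$ returns as $e^{2\pi i(\sigma+1)}F_{\mathrm{s}}$, which is \eqref{eq:monodromy}. Any simple loop encircling $z_c$ once counter-clockwise inside the disc is homotopic in $D^*$ to $\gamma$. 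By the monodromy theorem the result is independent of the choice of loop and of the base point.

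The root-of-unity statement is elementary number theory. Set $\omega=e^{2\pi i(\sigma+1)}$. We have $\omega^n=1$ if and only if $n(\sigma+1)\in\mathbb{Z}$. If $\sigma+1=p/q$ with $\gcd(p,q)=1$, this holds if and only if $q\mid n$, so $\omega$ is a primitive $q$-th root of unity. Conversely, if $\omega$ is a primitive $q$-th root of unity, then $q(\sigma+1)\in\mathbb{Z}$. Writing $\sigma+1=p'/q'$ in lowest terms, primitivity forces $q'=q$. If $\sigma+1$ is irrational, $\omega$ has infinite order. For $d=2$ we have $\sigma+1=5/6$, so $q=6$. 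Note that $\sigma\in(-1,0)$ gives $\sigma+1\in(0,1)$, hence $\omega\neq1$ and the branch point is genuine.

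The main obstacle is the remainder $R$. (H2) only gives $R=o(|z-z_c|^{\sigma+1})$ on the cut disc and says nothing about its continuation across the cut. In fact, because $\mathcal{Z}_\beta$ is a natural boundary for the physical sheet, continuing $F$ itself across the arc is delicate. To handle this I would interpret $\mathcal{M}$ as acting on the germ of the singular part: it is the multiplier of $F_{\mathrm{s}}$ under continuation, which is well defined since $F_{\mathrm{s}}$ is explicit. I would add a remark that, whenever $F$ admits a local decomposition $F=F(z_c)+F_{\mathrm{s}}+\Freg$ with $\Freg$ analytic (or single-valued) near $z_c$, the variation $F^\gamma-F$ equals $(\omega-1)F_{\mathrm{s}}$. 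This identifies the monodromy of $F$ with that of $F_{\mathrm{s}}$. I expect this interpretive step, rather than the calculation, to be what needs care.
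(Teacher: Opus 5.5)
Your argument is correct for the displayed formula. The core step, the factor $e^{2\pi i(\sigma+1)}$ picked up by $(z-z_c)^{\sigma+1}$ when $\arg(z-z_c)$ increases by $2\pi$, is the same as in the paper. The difference is where the rest of the burden sits.

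\textbf{The paper's route.} It first invokes Lemma~\ref{lem:reg-sing} to write $F(z_c+\xi)=\Freg(\xi)+\sum_{k\ge0}B_k\xi^{\sigma+1+k}$, with $\Freg$ a convergent power series. It then argues termwise: $\Freg$ is single-valued, and every singular term picks up the same factor because the exponents are spaced by integers. Granting the lemma, this is a statement about the continuation of $F$ itself, and it covers the whole singular series, not only the leading term.

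\textbf{Why the lemma is not free.} It does not follow from \textup{(H1)--(H2)}. Its proof uses the Stieltjes representation and assumes that the corrections to $\rho\sim At^{\sigma}$ are polynomial in $t$. Proposition~\ref{prop:H2} only assumes $r=O(|\theta-\theta_c|^{\sigma+1})$, which does not force integer-spaced exponents. For instance, a correction of order $t^{\sigma+3/2}$ generically contributes a term $\xi^{\sigma+5/2}$, whose multiplier is $-e^{2\pi i(\sigma+1)}$.

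\textbf{Your route.} You compute the multiplier of the explicit germ $B(z-z_c)^{\sigma+1}$ directly. This is rigorous from \textup{(H2)} alone; \textup{(H1)} is never used. You also correctly isolate that transferring the result to $F$ requires exactly such a decomposition. This is unavoidable: any small loop around $z_c$ crosses $\mathcal{Z}_\beta$, about which \textup{(H1)--(H2)} say nothing. You state the decomposition as an explicit hypothesis instead of claiming to derive it. Your proof of the primitive-root equivalence in both directions is also more complete than the paper's one-line remark.

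\textbf{Two minor corrections.}
\begin{enumerate}
\item In your first paragraph you list ``a sum of higher powers $(z-z_c)^{\sigma+1+k}$'' among the cases where $R$ extends analytically across the cut. It does not: such terms are multivalued and are multiplied by the same factor $\omega$. In that case the variation is $(\omega-1)$ times the full singular series, not $(\omega-1)B(z-z_c)^{\sigma+1}$. This is exactly why the paper's lemma insists on integer spacing.
\item Drop the assertion that $\mathcal{Z}_\beta$ is a natural boundary of the physical sheet. It is not needed, and it fails for the one-dimensional Ising model. There $F$ is, up to an elementary term, the logarithm of the dominant transfer-matrix eigenvalue. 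That eigenvalue is an algebraic function that does not vanish on $\mathbb{C}^*$ and whose only branch points in $\mathbb{C}^*$ are the two endpoints of the arc, so $F$ continues across the open arc.
\end{enumerate}
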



\begin{thmC}[Kac monodromy]
  \label{thm:Kac}
  Let $\mathcal{C}$ be a 2D CFT at an RG fixed point with a relevant
  primary operator $\phi$ of conformal weight $h_\phi<0$.  Assume:
  \begin{enumerate}[label={\normalfont(\roman*)}]
    \item\label{it:Kac-LY}
    The partition function of $\mathcal{C}$ satisfies the Lee--Yang
    property as a function of the coupling $t$ to $\phi$.
    \item\label{it:Kac-RG}
    The singular free energy satisfies 
   \begin{equation}\label{eq:RG} F_{\mathrm{s}}(\lambda^{y_t}t)
    =\lambda^2 F_{\mathrm{s}}(t),\ \  \forall \lambda>0,\ \text{with}\ y_t=2(1-h_\phi).\end{equation}
  \end{enumerate}
  Then
  \begin{equation}
    \label{eq:Kac-result}
    \sigma = \frac{h_\phi}{1-h_\phi},
    \qquad
    q = \mathrm{denom}\!\left(\frac{1}{1-h_\phi}\right).
  \end{equation}
\end{thmC}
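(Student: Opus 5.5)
The plan is to solve the scaling functional equation \eqref{eq:RG} explicitly and then read off $\sigma$ by matching with the edge expansion (H2). The monodromy order then follows from Theorem~B.

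\textbf{Solving the scaling equation.} Fix $t$ on a ray $t=|t|e^{i\alpha}$ in the domain of $F_{\mathrm{s}}$. Choose $\lambda=|t|^{-1/y_t}$ in \eqref{eq:RG}, so that $\lambda^{y_t}t=e^{i\alpha}$. This gives
\[
F_{\mathrm{s}}(t)=|t|^{2/y_t}\,F_{\mathrm{s}}(e^{i\alpha}),
\]
so $F_{\mathrm{s}}$ is positively homogeneous of degree $2/y_t=1/(1-h_\phi)$ along every ray. Since $F_{\mathrm{s}}$ is analytic off the cut, $g(t)\coloneqq F_{\mathrm{s}}(t)\,t^{-1/(1-h_\phi)}$ is analytic and constant along rays. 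Because $g$ is constant along rays, $t\,\partial_t g$ vanishes. The Cauchy--Riemann equations then make $g$ locally constant on the slit plane. Hence
\[
F_{\mathrm{s}}(t)=B\,t^{1/(1-h_\phi)}
\]
for a single constant $B$.

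\textbf{Identifying $\sigma$.} Hypothesis (i), the Lee--Yang property, supplies an edge $t_c$ at which (H2) holds with a nonzero coefficient. I will identify the coupling $t$, measured from the edge, with a local analytic coordinate $\varphi(z)$ satisfying $\varphi'(z_c)\ne0$. By Theorem~A\ref{it:invariance}, the exponent in (H2) is invariant under such a change of coordinates. Comparing $B(z-z_c)^{\sigma+1}$ with $B\,t^{1/(1-h_\phi)}$, uniqueness of the leading exponent in an $o(|z-z_c|^{\sigma+1})$ expansion (with $B\neq0$) forces
\[
\sigma+1=\frac{1}{1-h_\phi},\qquad\text{that is,}\qquad \sigma=\frac{h_\phi}{1-h_\phi}.
\]
The sign condition is consistent: $h_\phi<0$ gives $1-h_\phi>1$, so $\sigma+1\in(0,1)$ and $\sigma\in(-1,0)$, as (H2) requires.

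\textbf{Monodromy order.} Theorem~B gives monodromy $e^{2\pi i(\sigma+1)}=e^{2\pi i/(1-h_\phi)}$. Its order is the reduced denominator of $1/(1-h_\phi)$, which is $q$ in \eqref{eq:Kac-result}. As a check, $h_{1,2}=-1/5$ gives $1/(1-h_\phi)=5/6$, hence $\sigma=-1/6$ and $q=6$.

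\textbf{Main obstacle.} The delicate step is the identification of $t$ with $z-z_c$. The RG equation is stated for the scaling field $t$, while (H2) is stated in the fugacity variable. I will treat $t$ as an analytic reparametrization of the fugacity near the edge with nonvanishing derivative, which is what hypothesis~(i) and Theorem~A\ref{it:invariance} allow. I also need $F_{\mathrm{s}}$ to be exactly the leading singular part, so that analytic or regular terms of lower order do not spoil the matching; I will argue this via the $o(\cdot)$ remainder.
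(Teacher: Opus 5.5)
Your proposal is correct and follows essentially the same route as the paper: choose $\lambda$ to normalise $t$, obtain $F_{\mathrm{s}}(t)\propto t^{2/y_t}=t^{1/(1-h_\phi)}$, match this exponent with $\sigma+1$, and read off $q$ from Theorem~B. Your additions make explicit what the paper leaves implicit (the paper simply sets $\lambda=t^{-1/y_t}$, tacitly for $t>0$, and matches): the ray-by-ray scaling with the Cauchy--Riemann step, the appeal to Theorem~A(iii), and the remark that regular terms are $O(|t|)=o(|t|^{\sigma+1})$. The Cauchy--Riemann step is not actually needed, since matching along a single ray already pins down the exponent, and both arguments equally assume that $t$ is a local coordinate at $z_c$ and that (H2) holds.
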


Hypothesis~\ref{it:Kac-RG} is the defining property of an RG fixed
point: the singular free energy transforms homogeneously.  It is
satisfied by every Virasoro minimal model $\mathcal{M}(p,q)$
\cite{Cardy85}.

\section{Proofs of Theorems A and B}
\label{sec:proofs}

\subsection{Key lemma}
\label{ssec:Jensen-lemma}

\begin{lemma}[Jensen's formula for a branch-point factor]
  \label{lem:Jensen-branch}
  Let $z_c=e^{i\theta_c}\in\mathbb{T}$ and $\alpha>0$.  For the
  principal branch of $(z-z_c)^\alpha$ on $\mathbb{C}\setminus[z_c,+\infty)$,
  \begin{equation}
    \label{eq:Jensen-branch}
    \frac{1}{2\pi}\int_0^{2\pi}
      \log|(e^{x+i\theta}-z_c)^\alpha|\,d\theta
    = \alpha\max(x,0), \quad x\in\mathbb{R}.
  \end{equation}
\end{lemma}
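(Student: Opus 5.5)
The plan is to reduce the statement to the classical mean value $\frac{1}{2\pi}\int_0^{2\pi}\log|re^{i\theta}-a|\,d\theta=\log\max(r,|a|)$, which is Jensen's formula applied to the linear polynomial $w-a$.

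The first step removes the branch. For any branch of $(z-z_c)^\alpha$, including the principal one on $\mathbb{C}\setminus[z_c,+\infty)$, we have $|(z-z_c)^\alpha|=|z-z_c|^\alpha$. Indeed, $(z-z_c)^\alpha=\exp(\alpha\log(z-z_c))$ for some logarithm, and its modulus is $\exp(\alpha\log|z-z_c|)$ because $\alpha$ is real. Hence the integrand equals $\alpha\log|e^{x+i\theta}-z_c|$, which is single-valued. It is defined for every $\theta$ except possibly $\theta=\theta_c$ when $x=0$; on the cut itself we may use either boundary value, since both have the same modulus. The choice of branch therefore plays no role, and the left side of \eqref{eq:Jensen-branch} equals
\[
\frac{\alpha}{2\pi}\int_0^{2\pi}\log|e^{x}e^{i\theta}-z_c|\,d\theta .
\]

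The second step evaluates this integral with $r=e^x$ and $a=z_c$, $|a|=1$. In the case $x<0$ we have $r<1$, so $\log|w-z_c|$ is harmonic on a neighbourhood of $|w|\le r$. The mean value property then gives $\log|0-z_c|=0$. In the case $x>0$ we write $|re^{i\theta}-z_c|=r\,|1-\bar z_c r^{-1}e^{-i\theta}|\cdot|z_c|$, with $|z_c|=1$, and substitute $\theta\mapsto-\theta$. The integral becomes $\log r+\frac{1}{2\pi}\int\log|1-(\bar z_c/r)e^{i\theta}|\,d\theta$. The last term vanishes by the mean value property for $\log|1-bw|$ with $|b|<1$, so the result is $x$. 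Multiplying by $\alpha$ yields $\alpha\max(x,0)$ in both cases.

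The only delicate step is the boundary case $x=0$, where the integrand has a logarithmic singularity at $\theta=\theta_c$. There I would invoke the classical identity $\int_0^{2\pi}\log|1-e^{i\theta}|\,d\theta=0$, reducing to it by the rotation $\theta\mapsto\theta+\theta_c$. This identity can be justified in either of two ways. One is the explicit computation $\log|2\sin(\theta/2)|$ together with $\int_0^{\pi}\log\sin t\,dt=-\pi\log 2$. The other is continuity: the function $r\mapsto\frac{1}{2\pi}\int\log|re^{i\theta}-1|\,d\theta$ equals $\log\max(r,1)$ for $r\neq1$, and dominated convergence applies as $r\to1$ because $\log|re^{i\theta}-1|\ge\log|\sin\theta|$ near the singularity, which is integrable. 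This gives the value $0=\alpha\max(0,0)$ and completes the proof.
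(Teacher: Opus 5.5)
Your proof is correct and follows the same route as the paper: you remove the branch via $|(z-z_c)^\alpha|=|z-z_c|^\alpha$ and evaluate the circular mean of $\log|z-z_c|$ by Jensen's formula for the linear factor, and your case split $x<0$, $x>0$, $x=0$ simply unpacks the paper's single appeal to Jensen (your explicit handling of $x=0$, where the zero lies on the circle, is more careful than the paper's bare assertion that the formula is exact for all $x$). One small slip: for $x>0$ the pointwise identity should read $|re^{i\theta}-z_c|=r\,|1-\bar z_c r^{-1}e^{i\theta}|$, not with $e^{-i\theta}$, but the mean of $\log|1-be^{\pm i\theta}|$ vanishes for $|b|<1$ either way, so your conclusion is unaffected.
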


\begin{proof}
  The integral equals $\alpha$ times the Jensen average of
  $\log|z-z_c|$ on $|z|=e^x$.  Jensen's formula for $f(z)=z-z_c$ on
  the disc $|z|\leq e^x$ gives
  \[
    \frac{1}{2\pi}\int_0^{2\pi}\log|e^{x+i\theta}-z_c|\,d\theta
    = \log|{-}z_c| + \max(0,x-\log|z_c|) = \max(x,0),
  \]
  since $|z_c|=1$.  The formula is exact for all $x$ and $\alpha>0$;
  the branch cut contributes nothing to the circular average.
\end{proof}

\subsection{Proof of Theorem A}
\label{ssec:proof-A}

\begin{proof}[Proof of Theorem~A]
  \textbf{Part~\ref{it:TDL}.}
  By (H1), $(1/|V_n|)\log|Z_n(e^{x+i\theta})|\to|F(e^{x+i\theta})|$
  locally uniformly for $x\neq 0$.  Passage of the limit under the
  integral follows from the uniform $L^1$ bound (H3) by dominated
  convergence.

  \medskip
  \textbf{Part~\ref{it:slope}.}
  Write $\widetilde{F}=F_{\mathrm{s}}(1+\varepsilon)$ where
  $F_{\mathrm{s}}=B(z-z_c)^{\sigma+1}$ and
  $\varepsilon(z)=R(z)/F_{\mathrm{s}}(z)\to 0$ as $z\to z_c$ by (H2).
  Then
  \[
    \widetilde{N}(x)
    = \underbrace{\mathcal{J}[F_{\mathrm{s}}](x)}_{\text{Step 1}}
    + \underbrace{\mathcal{J}[\log|1+\varepsilon|](x)}_{\text{Step 2}}.
  \]

  \textit{Step 1.}
  By Lemma \ref{lem:Jensen-branch} with $\alpha=\sigma+1$:
  $\mathcal{J}[F_{\mathrm{s}}](x)=\log|B|+(\sigma+1)x+o(x)$ as $x\to 0^+$.

  \textit{Step 2.}
  Split: $I_{\mathrm{near}}=\{|\theta-\theta_c|\leq x^{1/2}\}$,
  $I_{\mathrm{far}}=[0,2\pi)\setminus I_{\mathrm{near}}$.

  On $I_{\mathrm{near}}$ (measure $2x^{1/2}$): since
  $e^{x+i\theta}\to z_c$ as $x\to 0^+$ for $\theta\in I_{\mathrm{near}}$,
  (H2) gives $|\varepsilon|\leq\eta=x^{1/2}\to 0$.  Using
  $|\log|1+w||\leq 2|w|$ for $|w|\leq 1/2$, the contribution is
  $\leq 2x^{1/2}\cdot 2x^{1/2}/(2\pi)=2x/\pi=o(x)$.

  On $I_{\mathrm{far}}$: $e^{x+i\theta}\to e^{i\theta}\neq z_c$
  uniformly, and $\varepsilon(e^{i\theta})=0$ (since $R$ vanishes on
  $\mathbb{T}$), so $\varepsilon(e^{x+i\theta})=O(x)$ uniformly.
  The contribution is $\leq Cx/(2\pi)\int_0^{2\pi}1\,d\theta=Cx$,
  where $C\to 0$ as $x\to 0$ by dominated convergence using (H3);
  hence $o(x)$.

  Combining: $\widetilde{N}(x)=\log|B|+(\sigma+1)x+o(x)$.

  \medskip
  \textbf{Part~\ref{it:invariance}.}
  $\widetilde{F}(\varphi(z))=B\varphi'(z_c)^{\sigma+1}(z-z_c')^{\sigma+1}
  +o(|z-z_c'|^{\sigma+1})$ by the chain rule, verifying (H2) with the
  same $\sigma$ and $B'=B\varphi'(z_c)^{\sigma+1}\neq 0$.
\end{proof}

\subsection{Proof of Theorem B}
\label{ssec:proof-B}

\begin{lemma}[Regular--singular decomposition]
  \label{lem:reg-sing}
  Under \textup{(H1)--(H2)}, there exist convergent series
  $\Freg(\xi)=\sum_{k=0}^\infty a_k\xi^k$ and
  $F_{\mathrm{s}}(\xi)=\sum_{k=0}^\infty B_k\xi^{\sigma+1+k}$ with
  $B_0=B\neq 0$, such that $F(z_c+\xi)=\Freg(\xi)+F_{\mathrm{s}}(\xi)$.
  The exponents in $F_{\mathrm{s}}$ are $\sigma+1,\sigma+2,\ldots$
  (integer increments above $\sigma+1$).
\end{lemma}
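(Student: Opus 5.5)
The plan is to reduce the lemma to a removable-singularity argument for two single-valued functions built from $F$ and its continuation around $z_c$. Put $\alpha=\sigma+1\in(0,1)$ and $G(\xi)=F(z_c+\xi)-F(z_c)$. Hypotheses (H1)--(H2) as literally stated only give the leading term of $G$ on a single sheet. A convergent expansion with exponents $\alpha+k$ needs more structure, so the first step is to make this structure explicit. I would take it from the representation behind Proposition~\ref{prop:H2}: $F$ is the logarithmic potential of the zero density,
\[
F(z)=\int \log(z-e^{i\theta})\,\rho(\theta)\,d\theta+(\text{analytic}),
\qquad
F'(z)=\int\frac{\rho(\theta)\,d\theta}{z-e^{i\theta}}+(\text{analytic}).
\]
Near the edge I would strengthen \eqref{eq:rho-asym} to $\rho(\theta)=|\theta-\theta_c|^{\sigma}g(\theta)$ with $g$ real-analytic near $\theta_c$ and $g(\theta_c)=A$. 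This is the assumption I expect to carry the real content.

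Under that assumption, the classical Muskhelishvili analysis of Cauchy integrals at an endpoint applies. Change variables $w=e^{i\theta}$, so the density becomes $(w-z_c)^{\sigma}\tilde g(w)$ with $\tilde g$ holomorphic near $z_c$. Deform the part of the contour away from $z_c$, which contributes a holomorphic function of $z$. The local piece then equals $\Phi_1(\xi)+\xi^{\sigma}\Phi_2(\xi)$ with $\Phi_1,\Phi_2$ holomorphic at $0$. Here $\Phi_2$ is, up to the factor $\pi/\sin(\pi\sigma)$ and a unimodular constant, the analytic continuation of $\tilde g$. Integrating term by term in $\xi$ gives
\[
G(\xi)=\Freg(\xi)-F(z_c)+\xi^{\alpha}\sum_{k\ge0}B_k\xi^k .
\]
Every term of $\xi^{\sigma}\Phi_2$ has exponent $\sigma+k\neq-1$ because $\sigma>-1$, so no logarithms appear and the exponents shift to $\alpha,\alpha+1,\dots$. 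Convergence holds on the disc where $\tilde g$ is holomorphic. Comparing the leading term with (H2) forces $B_0=B$, and $B\neq0$ is part of (H2) (or follows from $A>0$ via the explicit formula of Proposition~\ref{prop:H2}).

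As a cross-check, or as an alternative route avoiding the Cauchy-integral bookkeeping, I would argue via monodromy. Suppose $G$ continues along every path in a punctured disc, and the variation $D=\mathcal{M}G-G$ satisfies $\mathcal{M}D=e^{2\pi i\alpha}D$. This relation is again a consequence of the structure of $\rho$: the jump of $F$ across the cut is the integrated density, itself of the form $\xi^{\alpha}\cdot(\text{analytic})$.

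The removable-singularity argument then runs as follows.
\begin{itemize}[label={}]
\item First, $H(\xi):=\xi^{-\alpha}D(\xi)/(e^{2\pi i\alpha}-1)$ is single-valued on the punctured disc.
\item Applying (H2) on each sheet gives $H=O(1)$, so $H$ is holomorphic at $0$.
\item Next, $G-\xi^{\alpha}H$ is single-valued and $O(1)$, hence also holomorphic; this yields $\Freg$.
\end{itemize}

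The main obstacle is the first step: justifying the analytic (rather than merely asymptotic) form of $\rho$ near $\theta_c$, or equivalently the rank-two local monodromy. For this I would appeal to the CFT/RG input used for the $d=2$ Ising model. Everything after that is routine.
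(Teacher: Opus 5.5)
Your main route is essentially the paper's. You split the Stieltjes (logarithmic-potential) representation into a far part, holomorphic at $z_c$ and giving $F_{\mathrm{reg}}$, and a near-edge part, in which an analytic correction to $\rho\sim At^{\sigma}$ produces only the exponents $\sigma+1+k$. The paper does the near-edge part term by term via the Mellin formula of Proposition~\ref{prop:H2}; your Muskhelishvili endpoint decomposition $\Phi_1(\xi)+\xi^{\sigma}\Phi_2(\xi)$ is the same computation and handles convergence of the series more cleanly.

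You are also right that \textup{(H1)--(H2)} alone cannot give the integer-step structure. The paper's proof tacitly uses the Stieltjes representation and the assumption that the corrections to $\rho\sim At^{\sigma}$ are polynomial (analytic) in $t$, which is exactly the strengthening you make explicit.
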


\begin{proof}
  Decompose the Stieltjes representation of $\widetilde{F}$ into
  contributions from a neighbourhood of $\theta_c$ (yielding
  $F_{\mathrm{s}}$ via the Mellin computation in Proposition \ref{prop:H2}) and
  from the complementary arc (analytic in $\xi$, giving $\Freg$).
  The integer-step structure of $F_{\mathrm{s}}$ follows because the
  corrections to $\rho\sim At^\sigma$ are polynomial in $t$, producing
  only integer increments in the Mellin transform.
\end{proof}

\begin{proof}[Proof of Theorem~B]
  Write $F(z_c+\xi)=\Freg(\xi)+F_{\mathrm{s}}(\xi)$ from
  Lemma \ref{lem:reg-sing}.

  \textit{$\Freg$ has trivial monodromy} since each $a_k\xi^k$ is
  single-valued.

  \textit{Monodromy of $F_{\mathrm{s}}$.}  Under $\xi\mapsto\xi e^{2\pi i}$,
  \[
    B_k(\xi e^{2\pi i})^{\sigma+1+k}
    = e^{2\pi i(\sigma+1)}\,e^{2\pi ik}\,B_k\xi^{\sigma+1+k}
    = e^{2\pi i(\sigma+1)}\,B_k\xi^{\sigma+1+k},
  \]
  since $e^{2\pi ik}=1$ for integer $k\geq 0$.  Summing over $k$:
  $F_{\mathrm{s}}(\xi e^{2\pi i})=e^{2\pi i(\sigma+1)}F_{\mathrm{s}}(\xi)$,
  giving \eqref{eq:monodromy}.  The monodromy has order $q$ iff
  $q(\sigma+1)\in\mathbb{Z}$ with $q$ minimal, i.e.\ $q$ is the
  denominator of $\sigma+1$ in lowest terms.
\end{proof}

\subsection{The Stieltjes perspective}
\label{ssec:Stieltjes}

The following corollary records the interpretation of the Jensen slope
as total zero mass.

\begin{proposition}[Jensen slope as total mass]
  \label{prop:Stieltjes}
  Assume additionally the Stieltjes representation
  $\widetilde{F}(z)=\int_{\mathcal{Z}_\beta}\log\frac{z-e^{i\phi}}{z_0-e^{i\phi}}
  \,\rho(\phi)\frac{d\phi}{2\pi}$
  for some $|z_0|\neq 1$ and $\rho\in L^1$.  Then
  \begin{equation}
    \label{eq:mass-formula}
    \widetilde{N}'(0^+)
    = \frac{1}{2\pi}\int_{\mathcal{Z}_\beta}\rho(\phi)\,d\phi
    = \sigma+1,
  \end{equation}
  where the second equality is a consequence of Theorem~A.
\end{proposition}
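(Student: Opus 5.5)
The plan is to compute a circular average directly from the Stieltjes representation, using Fubini and Lemma~\ref{lem:Jensen-branch} with $\alpha=1$, and then to connect the result to $\widetilde{N}'(0^+)$ through Theorem~A. Taking real parts of the representation gives
\[
  \operatorname{Re}\widetilde{F}(z)=\int_{\mathcal{Z}_\beta}\log|z-e^{i\phi}|\,\rho(\phi)\frac{d\phi}{2\pi}
  \;-\;\int_{\mathcal{Z}_\beta}\log|z_0-e^{i\phi}|\,\rho(\phi)\frac{d\phi}{2\pi}.
\]
Since $|z_0|\neq 1$, the second term is a finite constant $c_0$, because $\rho\in L^1$ and the logarithm is bounded on $\mathbb{T}$.

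Next I average over the circle $|z|=e^x$. Lemma~\ref{lem:Jensen-branch} with $\alpha=1$ and $z_c$ replaced by $e^{i\phi}$ gives $\frac{1}{2\pi}\int_0^{2\pi}\log|e^{x+i\theta}-e^{i\phi}|\,d\theta=\max(x,0)$ for every $\phi$. To justify exchanging the $\theta$ and $\phi$ integrals I use Tonelli. For $x\neq 0$ the integrand is bounded, since $\bigl|e^{x+i\theta}-e^{i\phi}\bigr|\geq |e^x-1|$. For $x=0$ the function $\theta\mapsto\log|e^{i\theta}-e^{i\phi}|$ is in $L^1$ with norm independent of $\phi$, and this suffices because $\rho\in L^1$. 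Fubini then yields
\[
  \frac{1}{2\pi}\int_0^{2\pi}\operatorname{Re}\widetilde{F}(e^{x+i\theta})\,d\theta
  = \Bigl(\frac{1}{2\pi}\int_{\mathcal{Z}_\beta}\rho\,d\phi\Bigr)\max(x,0)-c_0 .
\]
So this average is piecewise linear, and its right derivative at $0$ is exactly the total mass $\frac{1}{2\pi}\int\rho$.

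The final step, which I expect to be the main obstacle, is identifying the slope just computed with $\widetilde{N}'(0^+)$. Here $\widetilde{N}$ averages $\log|\widetilde{F}|$, whereas the computation averages $\operatorname{Re}\widetilde{F}=\log|e^{\widetilde{F}}|$. So the first equality in \eqref{eq:mass-formula} should be read as applying the Jensen functional to the partition-function density $e^{\widetilde{F}}$. This is also what Theorem~A\ref{it:TDL} computes in the limit: $(1/|V_n|)\log|Z_n|\to\operatorname{Re}F$. I would make this identification explicit via Theorem~A\ref{it:TDL}. Applied at finite volume, Jensen's formula for the polynomial $Z_n$ states that the slope jump at $x=0$ of $\frac{1}{|V_n|}\int\log|Z_n(e^{x+i\theta})|\,d\theta/2\pi$ equals the normalized number of zeros on $\mathbb{T}$. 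Passing to the limit, using (H3) for uniform integrability, recovers the total mass $\frac{1}{2\pi}\int\rho$. This gives the first equality.

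The second equality is then imported from Theorem~A\ref{it:slope}, which asserts $\widetilde{N}'(0^+)=\sigma+1$. If the two functionals do not coincide, the first thing I would try is to rewrite Step~1 of the proof of Theorem~A for $\operatorname{Re}\widetilde{F}$ in place of $\log|\widetilde{F}|$, and to check whether the branch-point contribution still produces the slope $\sigma+1$. I would flag that step as the one needing genuine care rather than a formal citation.
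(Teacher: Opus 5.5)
Your computation is the paper's entire proof: take real parts of the representation, apply Lemma~\ref{lem:Jensen-branch} with $\alpha=1$ at each point $e^{i\phi}$, exchange the integrals, and differentiate at $0^+$. Your Tonelli justification is more careful than the paper's bare appeal to Fubini. What this actually establishes is
\[
  \frac{1}{2\pi}\int_0^{2\pi}\operatorname{Re}\widetilde{F}(e^{x+i\theta})\,d\theta
  = m\max(x,0)-c_0,
  \qquad m\coloneqq\frac{1}{2\pi}\int_{\mathcal{Z}_\beta}\rho\,d\phi .
\]
The paper simply writes the left-hand side as $\widetilde{N}(x)$. That is, it silently replaces $\log|\widetilde{F}|$ by $\operatorname{Re}\widetilde{F}=\log|e^{\widetilde{F}}|$, which is exactly the step you flag. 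Your proposal does not close that step, and it cannot be closed. Replacing $\rho$ by $\lambda\rho$ with $\lambda>0$ multiplies $\widetilde{F}$ by $\lambda$, so $\widetilde{N}$ shifts by the constant $\log\lambda$ and $\widetilde{N}'(0^+)$ is unchanged, whereas $m$ becomes $\lambda m$. Hence $\widetilde{N}'(0^+)=m$ is not a consequence of the representation for $\widetilde{N}$ as defined. Theorem~A(i) cannot serve as the bridge either. By (H1), the limit of $\frac{1}{|V_n|}\int_0^{2\pi}\log|Z_n(e^{x+i\theta})|\,d\theta$ is $\int_0^{2\pi}\operatorname{Re}F(e^{x+i\theta})\,d\theta$, as you observe, not $2\pi\widetilde{N}(x)$. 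Citing (i) as stated would only import the same conflation as an assumption.

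Your reinterpretation, reading the first equality as the Jensen functional of $e^{\widetilde{F}}$, makes that equality true. But it severs the second equality, because Theorem~A(ii) is a statement about $\log|\widetilde{F}|$. Your fallback of redoing Step~1 of Theorem~A for $\operatorname{Re}\widetilde{F}$ cannot succeed, and your own formula shows why: the slope of that average is exactly the global mass $m$. Under the rescaling above, (H2) persists with $B\mapsto\lambda B$ and the same $\sigma$, while $m\mapsto\lambda m$. So no analysis near $z_c$ can force $m=\sigma+1$; the Remark following the proposition itself concedes that $m$ is not locally determined. Moreover, your finite-volume Jensen count identifies $m$ with the limiting number of zeros per site. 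For the Ising model this equals $1$, since the fugacity polynomial has degree $|V_n|$ with all zeros on $\mathbb{T}$, and that is incompatible with $\sigma+1\in(0,1)$. Under neither reading do both equalities follow from the hypotheses. The gap you identified is genuine, and the paper's proof passes over it rather than resolving it.
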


\begin{proof}
  Apply Lemma \ref{lem:Jensen-branch} with $\alpha=1$ to each factor.
  By Fubini ($\rho\in L^1$): $\widetilde{N}(x)=\max(x,0)\cdot
  \frac{1}{2\pi}\int\rho\,d\phi+C_0$ for $x>0$.
  Differentiating at $0^+$ gives the first equality;
  Theorem~A gives the second.
\end{proof}

\begin{remark}
  The identity $\frac{1}{2\pi}\int\rho\,d\phi=\sigma+1$ is an output
  of the edge expansion (H2), not an input.  It cannot be deduced from
  the local asymptotics $\rho\sim A|\theta-\theta_c|^\sigma$ alone, which
  do not determine the global integral.
\end{remark}

\section{Applications}
\label{sec:applications}

\subsection{Proof of Theorem C}
\label{ssec:proof-C}

\begin{proof}[Proof of Theorem~C]
  \textbf{Deriving $\sigma$.}
  Setting $\lambda=t^{-1/y_t}$ in the RG equation \eqref{eq:RG}:
  $F_{\mathrm{s}}(1)=t^{-2/y_t}F_{\mathrm{s}}(t)$, hence
  $F_{\mathrm{s}}(t)=Ct^{2/y_t}$ with $C=F_{\mathrm{s}}(1)\neq 0$.
  Matching with $F_{\mathrm{s}}(t)\sim Bt^{\sigma+1}$:
  \[
    \sigma+1 = \frac{2}{y_t} = \frac{1}{1-h_\phi},
    \qquad
    \sigma = \frac{h_\phi}{1-h_\phi}.
  \]
  Since $h_\phi<0$: $\sigma+1\in(0,1)$, confirming $\sigma\in(-1,0)$.

  \textbf{Deriving $q$.}
  Theorem~B gives monodromy $e^{2\pi i(\sigma+1)}=e^{2\pi i/(1-h_\phi)}$.
  Since $h_\phi\in\mathbb{Q}$, this is a primitive $q_0$-th root of
  unity with $q_0=\mathrm{denom}(1/(1-h_\phi))$.
\end{proof}

\paragraph{Explicit values for $\mathcal{M}(2,2m+1)$.}
The Kac formula gives $h_{1,2}(2,2m+1)=(1-m)/(2m+1)$, so $\sigma=(1-m)/(3m)$:

\begin{center}
  \renewcommand{\arraystretch}{1.3}
  \begin{tabular}{lcccc}
    \toprule
    $\mathcal{C}$ & $h_\phi$ & $y_t$ & $\sigma$ & $q$ \\
    \midrule
    $\mathcal{M}(2,5)$  $(m=2)$ & $-1/5$  & $12/5$  & $-1/6$  & $6$  \\
    $\mathcal{M}(2,7)$  $(m=3)$ & $-2/7$  & $18/7$  & $-2/9$  & $9$  \\
    $\mathcal{M}(2,9)$  $(m=4)$ & $-1/3$  & $8/3$   & $-1/4$  & $4$  \\
    $\mathcal{M}(2,11)$ $(m=5)$ & $-4/11$ & $30/11$ & $-4/15$ & $15$ \\
    \bottomrule
  \end{tabular}
\end{center}

For $m=2$: $\sigma=-1/6$ and $q=6=\mathrm{denom}(5/6)$, reproducing
the Cardy--Zamolodchikov value \cite{Cardy85,Zamolodchikov91} and
confirming the theorem in the one independently verified case.

\subsection{The edge expansion from density asymptotics}
\label{ssec:H2-from-density}

We prove that (H2) is a consequence of the density asymptotics
\eqref{eq:rho-asym}, making all three theorems unconditional.

\begin{proposition}[Density asymptotics imply \textup{(H2)}]
  \label{prop:H2}
  Suppose the Stieltjes representation holds with $\rho\in L^1$, and
  $\rho(\theta)=A(\theta-\theta_c)_+^\sigma+r(\theta)$ near $\theta_c$
  with $r(\theta)=O(|\theta-\theta_c|^{\sigma+1})$ and $A>0$.  Then
  \textup{(H2)} holds with
  \begin{equation}
    \label{eq:B-explicit}
    B = \frac{A\,e^{i\pi\sigma/2}}{2i(\sigma+1)\sin(\pi\sigma)}
        \cdot e^{-i\theta_c(\sigma+1)},
    \qquad
    |B| = \frac{A}{2(\sigma+1)|\sin(\pi\sigma)|} \neq 0.
  \end{equation}
\end{proposition}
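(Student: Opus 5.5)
Differentiate the Stieltjes representation to reduce (H2) to a local singular-integral computation near the edge. For $z$ off the support,
\[
\widetilde{F}'(z)=\int_{\mathcal{Z}_\beta}\frac{\rho(\phi)}{z-e^{i\phi}}\,\frac{d\phi}{2\pi}.
\]
Fix a small $\eta>0$ and split $\rho=\rho\mathbf 1_{|\phi-\theta_c|>\eta}+A(\phi-\theta_c)_+^\sigma\mathbf 1_{|\phi-\theta_c|\le\eta}+r\mathbf 1_{|\phi-\theta_c|\le\eta}$.

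\textbf{Far part.} This piece gives a function analytic in a neighbourhood of $z_c$, so it contributes $O(1)$ to $\widetilde F'$.

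\textbf{Remainder part.} Since $r=O(|\phi-\theta_c|^{\sigma+1})$, this piece contributes $O(1+|\log|z-z_c||)$ to $\widetilde F'$. The exponent $\sigma+1>0$ makes $r(\phi)/(z-e^{i\phi})$ only logarithmically singular, because $|z-e^{i\phi}|\gtrsim|\phi-\theta_c|$ in a sector approach. After integrating from $z_c$, both this and the far part are $O(|\xi|\log(1/|\xi|))=o(|\xi|^{\sigma+1})$, as $\sigma+1<1$.

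\textbf{Main term.} Write $z=z_c+\xi$ and $e^{i\phi}=z_c e^{iu}=z_c(1+iu+O(u^2))$. Then $z-e^{i\phi}=z_c(w-iu)+O(u^2)$ with $w=\xi/z_c$. The main contribution becomes
\[
\frac{A}{2\pi z_c}\int_0^\eta\frac{u^\sigma}{w-iu}\,du ,
\]
up to corrections controlled as in the remainder step. Extend to $\int_0^\infty$ at the cost of an $O(1)$ term. The resulting Mellin/Stieltjes integral is classical: for $\arg$ suitably placed,
\[
\int_0^\infty \frac{u^\sigma\,du}{w-iu}=i^{-\sigma-1}\cdot\frac{\pi}{\sin(\pi(\sigma+1))}\,\cdot(-w)^{\sigma}\cdot(\text{phase}).
\]
This follows by rotating $u=-iwv$ (equivalently substituting $s=iu/w$) and using $\int_0^\infty v^\sigma/(1+v)\,dv=\pi/\sin(\pi(\sigma+1))=-\pi/\sin(\pi\sigma)$. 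Hence $\widetilde F'(z)=C\,\xi^{\sigma}+o(|\xi|^\sigma)$, and integrating from $z_c$ gives $\widetilde F(z)=\frac{C}{\sigma+1}\xi^{\sigma+1}+o(|\xi|^{\sigma+1})$. This uses $\widetilde F(z_c)=0$, i.e.\ continuity of $F$ at $z_c$, which follows since $\log$ is locally integrable against $\rho\in L^1$ with a bounded density near the edge from one side. Collecting the factors $1/(2\pi)$, $z_c^{-1}\cdot z_c^{-\sigma}=e^{-i\theta_c(\sigma+1)}$ and $i^{\pm\sigma}$ gives $B$ of the form \eqref{eq:B-explicit}, with the factor $e^{i\pi\sigma/2}$ coming from $i^{\sigma}$ and the $2i\sin(\pi\sigma)$ from the beta integral.

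\textbf{Main obstacle.} The hard step is the branch bookkeeping. One must check that the rotation of the contour and the principal branch on $\mathbb C\setminus[z_c,+\infty)$ prescribed in (H2) match the branch produced by the integral, uniformly as $z\to z_c$ from both sides of the arc. One must also check that the $O(u^2)$ linearisation error is uniformly $o(|\xi|^\sigma)$ in all directions of approach; near the cut this needs a dominated-convergence argument after scaling $u=|\xi|v$. The modulus $|B|=A/(2(\sigma+1)|\sin\pi\sigma|)$ is insensitive to these phases and is nonzero since $A>0$ and $\sigma\in(-1,0)$. So even if the phase constant needs adjustment, the conclusion $B\neq0$, which is what (H2) requires, is robust.
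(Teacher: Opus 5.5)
Your route is the paper's own: differentiate the Stieltjes representation, linearise $e^{i\phi}-z_c\approx iz_c(\phi-\theta_c)$, evaluate $\int_0^\infty t^\sigma(t+a)^{-1}\,dt=\pi a^\sigma/\sin(\pi(\sigma+1))$, and integrate back from $z_c$. Your far/remainder/tail bookkeeping goes beyond what the paper writes, which only says ``+(regular)''.

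The gap is that the one computation producing the stated $B$ is left as ``(phase)'' and ``$i^{\pm\sigma}$''. Moreover, the identity you display is wrong as written. At $w=-i$ one has $\int_0^\infty u^\sigma(w-iu)^{-1}\,du=i\int_0^\infty u^\sigma(1+u)^{-1}\,du=i\pi/\sin(\pi(\sigma+1))$. Your prefactor $i^{-\sigma-1}(-w)^\sigma$ instead equals $-i$ there (principal branches). So what you have proved is (H2) with $|B|=A/(2(\sigma+1)|\sin\pi\sigma|)\neq 0$, not \eqref{eq:B-explicit}.

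The repair is short. Write $(w-iu)^{-1}=i(u+iw)^{-1}$, so with $a=iw\notin(-\infty,0]$,
\[
\frac{A}{2\pi z_c}\int_0^\infty\frac{u^\sigma\,du}{w-iu}
=\frac{iA}{2z_c}\,\frac{(iw)^\sigma}{\sin(\pi(\sigma+1))}
=\frac{A\,e^{i\pi\sigma/2}}{2i\sin(\pi\sigma)}\,\frac{w^\sigma}{z_c},
\]
where $w^\sigma$ is taken with $\arg w\in(-3\pi/2,\pi/2)$. Since $w^\sigma/z_c=e^{-i\theta_c(\sigma+1)}\xi^\sigma$, integrating in $\xi$ gives exactly the paper's $B$; this is the paper's computation with $a=i\zeta$.

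This also settles your branch question. The identity is valid precisely for $w\notin i[0,\infty)$, which is the tangent to $\mathcal{Z}_\beta$ at $z_c$. So the $\xi^{\sigma+1}$ for which \eqref{eq:B-explicit} holds is the branch cut along $\mathcal{Z}_\beta$ itself, and that is the reading of (H2) you should prove. A literal cut along $[z_c,+\infty)$ is incompatible with the setting unless $z_c=-i$. Near $z_c$, $F$ is analytic across that ray, while $(z-z_c)^{\sigma+1}$ jumps there by the factor $e^{2\pi i(\sigma+1)}\neq 1$, i.e.\ by an amount $\asymp|z-z_c|^{\sigma+1}$.

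Separately, your error control is done on $\widetilde F'$, and there uniformity genuinely fails near the arc. The hypothesis $r=O(|\phi-\theta_c|^{\sigma+1})$ carries no regularity; for example, $r$ may jump at $\phi=\theta_c+2^{-k}$. Then $\int r(\phi)(z-e^{i\phi})^{-1}\,d\phi$ can blow up logarithmically as $z$ approaches $\mathcal{Z}_\beta$. Hence $\widetilde F'=C\xi^\sigma+o(|\xi|^\sigma)$ holds only in sectors avoiding the arc.

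Estimate the remainder on $\widetilde F$ instead. Split $\int[\log(z-e^{i\phi})-\log(z_c-e^{i\phi})]\,r(\phi)\,d\phi$ at $|\phi-\theta_c|=2|\xi|$. The outer part is $O(|\xi|)$. The inner part is $O(|\xi|^{\sigma+2}\log(1/|\xi|))$, because the logarithmic singularity is integrable. Both bounds are $o(|\xi|^{\sigma+1})$ uniformly in the direction of approach.

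A minor point: $\rho$ is not bounded at the edge, since $\sigma<0$. Continuity of $F$ at $z_c$ comes instead from $\rho\in L^p$ near $\theta_c$ for some $p>1$, which holds because $\sigma>-1$.
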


\begin{proof}
  Set $\xi=z-z_c$ and $\zeta=e^{-i\theta_c}\xi$.  Substituting
  $\phi=\theta_c+t$ and expanding $e^{i\phi}-z_c\approx ite^{i\theta_c}$
  for small $t$, the local contribution of the density near $\theta_c$
  to the Stieltjes representation is
  \[
    J(\zeta)
    = \frac{A}{2\pi}\int_0^\infty\log(\zeta-it)\,t^\sigma\,dt
    + \text{(regular)}.
  \]
  Differentiating in $\zeta$ and applying the Mellin--Barnes formula
  $\int_0^\infty t^\sigma/(t+a)\,dt=\pi a^\sigma/\sin(\pi(\sigma+1))$
  (for $a\notin(-\infty,0]$; proved by $t=au$ and Euler's reflection
  formula) with $a=i\zeta$, and using $\sin(\pi(\sigma+1))=-\sin(\pi\sigma)$:
  \[
    J'(\zeta) = \frac{Ae^{i\pi\sigma/2}}{2i\sin(\pi\sigma)}\,\zeta^\sigma.
  \]
  Integrating: $J(\zeta)=\frac{Ae^{i\pi\sigma/2}}{2i(\sigma+1)\sin(\pi\sigma)}
  \zeta^{\sigma+1}+C=B\xi^{\sigma+1}+C$
  with $B$ as in \eqref{eq:B-explicit}.  Since $A>0$, $\sigma+1>0$, and
  $\sin(\pi\sigma)\neq 0$ for $\sigma\in(-1,0)$, we have $B\neq 0$.
\end{proof}

\begin{corollary}[Unconditional $d=2$ Ising]
  \label{cor:d2}
  For the $d=2$ Ising model, $\rho(\theta)\sim A|\theta-\theta_c|^{-1/6}$
  \cite{Cardy85,Zamolodchikov91}.  Proposition \ref{prop:H2} with $\sigma=-1/6$
  gives \textup{(H2)} with
  $|B|=A/(2\cdot\tfrac{5}{6}\cdot\tfrac{1}{2})=6A/5\neq 0$.
  Hence Theorems~A, B, and C all apply unconditionally:
  $\widetilde{N}'(0^+)=5/6$ and monodromy order $q=6$.
\end{corollary}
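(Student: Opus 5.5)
The corollary is a direct combination of Proposition \ref{prop:H2} with Theorems~A, B and C, so the plan is to check the hypotheses of each ingredient in turn and then substitute $\sigma=-1/6$.

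\textbf{Step 1: obtain (H2).} I would first put the $d=2$ density asymptotics in the form Proposition \ref{prop:H2} requires. That proposition needs three things. The Stieltjes representation of $\widetilde F$ with $\rho\in L^1$ follows from the Lee--Yang circle theorem together with (H1): the limiting zero-counting measures are probability measures on $\mathbb T$, so $\rho\in L^1$ automatically. It also needs the one-sided expansion $\rho(\theta)=A(\theta-\theta_c)_+^{-1/6}+r(\theta)$ with $r=O(|\theta-\theta_c|^{5/6})$. Cited sources state only $\rho\sim A|\theta-\theta_c|^{-1/6}$. The input I would take from \cite{Cardy85,Zamolodchikov91} is therefore the expansion with first correction of relative order $|\theta-\theta_c|$, which is the leading analytic correction to scaling. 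I expect this to be the main obstacle: the corollary silently upgrades ``$\sim$'' to a quantitative remainder, and it also uses that $\rho$ vanishes on the side of $\theta_c$ outside the arc. If only ``$\sim$'' is available, I would fall back on rechecking the Mellin step in Proposition \ref{prop:H2} with $r=o(|\theta-\theta_c|^\sigma)$. That bound still yields a remainder $o(|\xi|^{\sigma+1})$ by an Abelian argument on $\int r(t)/(t+a)\,dt$.

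\textbf{Step 2: evaluate $|B|$.} With Proposition \ref{prop:H2} in hand, (H2) holds, and I would read off $|B|$ from \eqref{eq:B-explicit}. At $\sigma=-1/6$ we have $\sigma+1=5/6$ and $|\sin(-\pi/6)|=1/2$. Hence
$|B|=A/(2\cdot\tfrac56\cdot\tfrac12)=6A/5$, and this is nonzero because $A>0$.

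\textbf{Step 3: apply Theorems A, B and C.} (H1) and (H3) hold by Remark~\ref{rem:hypotheses}, so Theorem~A\ref{it:slope} applies and gives $\widetilde N'(0^+)=\sigma+1=5/6$. Theorem~B uses only (H1)--(H2). It gives the monodromy $e^{2\pi i\cdot 5/6}$, and since $\gcd(5,6)=1$ this is a primitive $6$th root of unity, so $q=6$. For Theorem~C, I would note that its hypotheses concern the CFT $\mathcal M(2,5)$ rather than (H1)--(H3). The unconditional content there is consistency: $h_{1,2}=-1/5$ gives $\sigma=h/(1-h)=-1/6$, which matches the exponent used above, and $q=\mathrm{denom}(5/6)=6$, which agrees with Theorem~B.
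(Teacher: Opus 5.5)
Your route is the paper's own: Proposition~\ref{prop:H2} at $\sigma=-1/6$, read off $|B|=6A/5$, then Theorems~A--C. Several of your caveats are sharper than the corollary itself. The cited sources give only $\rho\sim A|\theta-\theta_c|^{-1/6}$, and they obtain it by field-theoretic rather than rigorous arguments, so ``unconditional'' really means ``conditional on that input''. The one-sided support is genuinely needed. Theorem~C's hypotheses concern $\mathcal{M}(2,5)$ rather than the lattice model, so it supplies only a consistency check. The computation $|B|=6A/5$ and the value $q=\mathrm{denom}(5/6)=6$ from Theorem~B are fine.

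The genuine gap is in Step~3, where you use Theorem~A(ii) as a black box to get $\widetilde N'(0^+)=5/6$. Your own Step~1 shows this cannot work. If the limiting zero-counting measure is a probability measure, then $\frac{1}{2\pi}\int\rho\,d\phi=1$, and the paper's Proposition~\ref{prop:Stieltjes} would give $\widetilde N'(0^+)=1$. So the paper's results contradict each other at exactly this point.

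The faulty step is in the proof of Theorem~A(ii), on $I_{\mathrm{far}}$. It asserts $\varepsilon(e^{i\theta})=0$ ``since $R$ vanishes on $\mathbb T$''. But (H2) controls $R$ only as $z\to z_c$, while $\widetilde N$ averages over the whole circle. In fact (H2) cannot determine $\widetilde N$. Take $\widetilde F=B(z-z_c)^{\sigma+1}(z/z_c)^k$: it satisfies (H2) with the same $B,\sigma$, since the remainder is $O(|z-z_c|^{\sigma+2})$. Yet by Jensen's formula its right slope is $\sigma+1+k$.

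The root cause is that the paper conflates $\log|\widetilde F|$ with $\mathrm{Re}\,F=\lim|V_n|^{-1}\log|Z_n|$. This is visible in Theorem~A(i) and in the Fubini step of Proposition~\ref{prop:Stieltjes}. The circle average of $\mathrm{Re}\,F$ has right slope equal to the total mass $1$, independent of $\sigma$. So the claim $\widetilde N'(0^+)=5/6$ is established neither by your argument nor by the paper's. Only $|B|=6A/5$ and $q=6$ survive, granting Proposition~\ref{prop:H2} and Theorem~B.
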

\vskip 0.5cm

\noindent{\bf Conflict of Interest}\\
\noindent The author declares no conflict of interest.
\vskip 0.5cm

\noindent{\bf Data Availability}\\
\noindent No data were generated or analysed during this study. This work is entirely theoretical.

\printbibliography
\end{document}